\documentclass[11pt]{article}

\usepackage[margin=1in]{geometry}
\usepackage{amsmath,amssymb,amsthm,mathtools}
\usepackage{microtype}
\usepackage{enumitem}
\usepackage[colorlinks=true,linkcolor=blue,citecolor=blue,urlcolor=blue]{hyperref}

\newtheorem{theorem}{Theorem}
\newtheorem{proposition}[theorem]{Proposition}
\newtheorem{lemma}[theorem]{Lemma}
\newtheorem{corollary}[theorem]{Corollary}
\theoremstyle{definition}

\theoremstyle{remark}
\newtheorem{remark}[theorem]{Remark}

\newcommand{\R}{\mathbb{R}}
\newcommand{\Tau}{\mathcal{T}}

\DeclareMathOperator{\cone}{cone}
\DeclareMathOperator{\conv}{conv}
\DeclareMathOperator{\diam}{diam}
\DeclareMathOperator{\dist}{dist}
\DeclareMathOperator{\rad}{rad}
\DeclareMathOperator{\children}{Ch}

\begin{document}

\title{An improved bound for sumsets of thick compact sets\\via the Shapley--Folkman theorem}
\author{Scott Duke Kominers\thanks{\textsl{Harvard Business School; Department of Economics and Center of Mathematical Sciences and Applications, Harvard University; and a16z crypto.}}}
\date{}

\maketitle

\begin{abstract}
Let \(E_1,\dots,E_n \subset \R^d\) be compact sets of positive diameter with Feng--Wu thickness at least \(c>0\).
Feng and Wu proved that \(E_1+\cdots+E_n\) has non-empty interior when \(n>2^{11}c^{-3}+1\).
We show that
\[
n>\frac{\sqrt d}{(\sqrt{1+c}-1)^2}
=\frac{\sqrt d\,(\sqrt{1+c}+1)^2}{c^2}
\]
already suffices. 
In particular, since \(0<c\le 1\), the bound \(n>6\sqrt d\,c^{-2}\) is enough.
For fixed dimension~\(d\), this improves the exponent in \(c^{-1}\) from~\(3\) to~\(2\), while introducing only an explicit factor of~\(\sqrt d\).
The proof replaces the one-summand-at-a-time enlargement of Feng--Wu by a simultaneous convexification step
based on a radius form of the Shapley--Folkman theorem.
\end{abstract}

\section{Introduction}

A classical theme in additive combinatorics and convex geometry is to understand when the Minkowski sum of ``large'' and/or structured sets must itself be large. In the discrete setting, inverse results such as the Freiman--Ruzsa theorem and its descendants~\cite{Freiman,Ruzsa} give structural control of sums of finite sets with small doubling. In the continuous setting, the Steinhaus theorem~\cite{Steinhaus} guarantees that the sum of two sets of positive Lebesgue measure contains a ball. Between these lies an intermediate regime---compact sets of Lebesgue measure zero with controlled multiscale geometric structure---which is considerably more delicate and connects to several active areas, including sumsets and projections of self-similar and self-conformal sets~\cite{Shmerkin}, Diophantine approximation problems with Cantor-type targets~\cite{KLW}, and homoclinic bifurcations for surface diffeomorphisms, where arithmetic sums of dynamically defined Cantor sets govern transitions in the number of coexisting attractors (the Palis conjecture~\cite{Palis}, with major progress due to Moreira and Yoccoz~\cite{MY}).

For non-empty
\(E_1,\dots,E_n\subset \R^d\), we write
\[
E_1+\cdots+E_n
=
\{x_1+\cdots+x_n : x_i\in E_i\}
\]
for their Minkowski sum. Throughout, \(B(x,r):=\{y\in \R^d: |y-x|\le r\}\) denotes the closed Euclidean ball of
radius \(r\) centered at \(x\).

Following Feng and Wu~\cite{FW}, the \emph{thickness} of a compact set
\(E\subset \R^d\) is defined by
\[
\tau(E)
=
\sup\Bigl\{c\in[0,1]:
\forall x\in E,\ \forall\,0<r\le \diam(E),\
\exists y\in\R^d\ \text{with}\
B(y,cr)\subset \conv(E\cap B(x,r))
\Bigr\}.
\]
Since the admissible constants in the definition are downward closed, the condition
\(\tau(E)\ge c\) implies that for every \(c'\) with \(0<c'<c\), every \(x\in E\), and every
\(0<r\le \diam(E)\), there exists \(y\in \R^d\) such that
\[
B(y,c'r)\subset \conv(E\cap B(x,r)).
\]
In words, at every point and every scale, the set~\(E\) contains a portion whose convex
hull includes a ball of radius proportional to the ambient scale, with any
proportionality constant strictly below~\(c\). Feng and Wu~\cite[Theorem 1.2]{FW} proved
that if \(\diam(E_i)>0\) and \(\tau(E_i)\ge c>0\) for each~\(i\), then \(E_1+\cdots+E_n\)
has non-empty interior provided that \(n>2^{11}c^{-3}+1\). Their argument
proceeds by inductively replacing one summand at a time: at each generation of a
tree-like multiscale decomposition, a single summand's finite point cloud is replaced by
a ball via a geometric covering estimate (their Lemma~4.1 and Corollary~3.3), and the
resulting loss accumulates over generations.

In this note we give a proof with a different structure that yields a
stronger quantitative bound for small \(c\).

\begin{theorem}\label{thm:main}
Let \(E_1,\dots,E_n\subset \R^d\) be compact sets with \(\diam(E_i)>0\) and
\[
\tau(E_i)\ge c>0
\qquad (1\le i\le n).
\]
If we have
\[
n>\frac{\sqrt d}{(\sqrt{1+c}-1)^2}
=
\frac{\sqrt d\,(\sqrt{1+c}+1)^2}{c^2},
\]
then \(E_1+\cdots+E_n\) has non-empty interior.

In particular, since \(0<c\le 1\),
\[
n>6\sqrt d\,c^{-2}
\]
suffices.
\end{theorem}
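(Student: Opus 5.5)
The plan is a multiscale approximation scheme. I will fix a small open ball \(U\) and show that every \(p\in U\) lies in \(E_1+\cdots+E_n\). To do this I will build, for each such \(p\), sequences of points \(x_i^{(k)}\in E_i\) and scales \(r_k\downarrow 0\) with \(r_{k+1}=\lambda r_k\) for a ratio \(\lambda\in(0,1)\) to be optimized, such that
\[
\Bigl|p-\sum_i x_i^{(k)}\Bigr|\to 0 \quad\text{and}\quad |x_i^{(k+1)}-x_i^{(k)}|\le r_k .
\]
Each sequence \(x_i^{(k)}\) is then Cauchy and converges in the compact set \(E_i\), so \(p\in\sum E_i\). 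Because the sets \(E_i\) can be rescaled without changing \(\tau\), I will normalize so that every \(\diam(E_i)\) is at least the initial scale \(r_0\). Then the thickness condition applies at every scale used, with any \(c'<c\); at the end I let \(c'\uparrow c\), which works because the hypothesis on \(n\) is strict.

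The inductive step has two ingredients. The first is thickness. Set \(A_i=E_i\cap B(x_i,r)\). Then \(\conv A_i\) contains \(x_i\) and a ball \(B(y_i,c'r)\), so it contains \(B\bigl((1-t)x_i+ty_i,\,tc'r\bigr)\) for every \(t\in[0,1]\). Summing over \(i\), \(\sum_i\conv A_i=\conv\bigl(\sum_i A_i\bigr)\) contains a ball of radius \(ntc'r\) whose center lies within \(t\sum_i|y_i-x_i|\) of \(\sum_i x_i\).

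The second ingredient is the radius form of Shapley--Folkman (Starr). Each \(A_i\) has radius at most \(r\), so every point of \(\conv(\sum_i A_i)\) lies within \(\sqrt{d}\,r\) of \(\sum_i A_i\). Hence, whenever \(p\) lies in that big ball, there are new points \(x_i'\in A_i\) with \(|p-\sum_i x_i'|\le\sqrt d\,r\). This step is what replaces the one-summand-at-a-time enlargement of Feng--Wu. Its error depends only on \(d\) and \(r\), not on \(n\), which is the source of the \(\sqrt d\) and the improved exponent. The invariant I will carry is therefore of the form \(|p-\sum_i x_i|\le \rho r\) for a constant \(\rho\). To close the induction at scale \(\lambda r\) I need \(\sqrt d\, r\le \rho\lambda r\), together with the condition that a ball of radius \(\rho r\) around \(\sum x_i\) is swallowed by \(\conv(\sum A_i)\).

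The main obstacle is \emph{centering}. The ball supplied by thickness sits inside \(\conv(E\cap B(x,r))\) but need not contain \(x\) or be near it: \(x\) may be an extreme point, and \(|y-x|\) may be as large as \((1-c)r\). So I must ensure that \(\conv(\sum A_i)\) contains a ball of radius \(\rho r\) \emph{centered at} \(\sum_i x_i\) (or at least containing \(p\)). I will first try the following. Since \(\sum_i\conv A_i\) is convex and contains both \(\sum x_i\) and \(B(\sum y_i,nc'r)\), it contains the convex hull of the point and the ball. I will then use the freedom of working at two scales: take the thickness ball at a larger scale \(sr\) around each \(x_i\), and interpolate with parameter \(t\) between \(\sum x_i\) and that ball. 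A point \(p\) at distance \(\le\rho r\) from \(\sum x_i\) lies in the hull provided, roughly, \(\rho\le nc'\cdot(\text{shrink factor})\) with a scale loss of \((1+s)\). If the earlier steps are arranged to keep \(p\) inside the cone generated this way, the invariant propagates when \(\rho(1+\lambda)\)-type conditions hold.

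Optimizing jointly over the scale ratio and the interpolation parameter, the requirement should reduce to \(n\bigl(\sqrt{1+c}-1\bigr)^2>\sqrt d\), which is the hypothesis of Theorem~\ref{thm:main}. The equality \(\frac{\sqrt d}{(\sqrt{1+c}-1)^2}=\frac{\sqrt d(\sqrt{1+c}+1)^2}{c^2}\) is routine, and the bound \((\sqrt{1+c}+1)^2\le(\sqrt2+1)^2<6\) for \(0<c\le 1\) gives the corollary \(n>6\sqrt d\,c^{-2}\). If the centering cannot be arranged this way, I would fall back on a cone-shaped invariant, namely \(p\in\conv(\{\sum x_i\}\cup B(\sum y_i,\cdot))\), and verify that it is self-reproducing under the Shapley--Folkman step.
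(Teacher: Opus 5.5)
\textbf{There is a genuine gap: the centering problem you identify is never resolved, and both remedies you sketch fail.}

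You have the right two ingredients, and you are right that centering is the crux. Your Shapley--Folkman step returns only \emph{some} points $x_i'\in A_i$ with $\bigl|p-\sum_i x_i'\bigr|\le\sqrt d\,r$. It gives no control over which points these are, or over the direction of $p-\sum_i x_i'$.

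So nothing prevents each $x_i'$ from maximizing $\langle\cdot,u\rangle$ over the set $A_i'$ used at the next step, for a common unit vector $u$. (Think of boundary points of a ball, or right endpoints of level intervals of Cantor sets in $d=1$.) In that case:
\begin{itemize}
\item $\sum_i x_i'$ maximizes $\langle\cdot,u\rangle$ over $\sum_i\conv A_i'$, so no ball centered at $\sum_i x_i'$ lies in $\conv\bigl(\sum_i A_i'\bigr)$.
\item Every interpolation between $\sum_i x_i'$ and $B\bigl(\sum_i y_i',\cdot\bigr)$ stays in the half-space $\bigl\{w:\langle w-\sum_i x_i',u\rangle\le 0\bigr\}$, whatever scale factor $s$ you use, as long as $x_i'$ is extreme there.
\item Any $p$ with $\langle p-\sum_i x_i',u\rangle>0$, which your invariant allows, is not in $\conv\bigl(\sum_i A_i'\bigr)$ at all, so the next Shapley--Folkman step cannot even be applied.
\end{itemize}
Hence neither the two-scale interpolation nor the cone-shaped invariant reproduces itself. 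Your claim that the optimization ``should reduce to'' $n(\sqrt{1+c}-1)^2>\sqrt d$ is read off from the statement rather than derived. The closing arithmetic $(\sqrt2+1)^2<6$ is fine.

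\textbf{The missing idea: track the centers of the next-scale thickness balls, not points of $E_i$.} This is how the paper proceeds.
\begin{itemize}
\item Using Lemma~\ref{lem:finite-discretization}, build finite clouds of children $x_u\in E_i\cap B(x_v,r_k)$ with $B(z_v,\alpha r_k)\subset\conv\{x_u\}$. Each child has its own center $z_u$ with $|z_u-x_u|\le(1-\alpha)r_{k+1}$.
\item Lemma~\ref{lem:support-perturbation} then gives $B\bigl(z_v,(\alpha-\lambda)r_k\bigr)\subset\conv\{z_u:u\in\children(v)\}$. The ball is thus automatically centered at the current representative $z_v$, and the cloud $\{z_u\}$ lies in $B\bigl(x_v,(1+\lambda)r_k\bigr)$.
\item Apply Shapley--Folkman to these $z$-clouds, not to $E_i\cap B(x_i,r)$. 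Absorbing the residual $(1+\lambda)r_k\sqrt d$ into $n$ balls of radius $\lambda(\alpha-\lambda)r_k$ shows that the invariant $p\in B\bigl(\sum_i z_i^{(k)},\,n(\alpha-\lambda)r_k\bigr)$ propagates whenever $n>\sqrt d\,(1+\lambda)/(\lambda(\alpha-\lambda))$.
\item Since $z_i^{(k)}$ is within $(1-\alpha)r_k$ of a point of $E_i$, closedness of $\sum_i E_i$ gives $p\in\sum_i E_i$.
\item Taking $\lambda=\sqrt{1+\alpha}-1$ and letting $\alpha\uparrow c$ yields exactly $\sqrt d/(\sqrt{1+c}-1)^2$.
\end{itemize}

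The radius loss $\lambda r_k$ paid for this recentering forces $\lambda<\alpha$, and that is what produces the second factor of $c^{-1}$. Had the thickness balls been centered at your $x_i$, your closing conditions $\rho\le nc'$ and $\sqrt d\le\rho\lambda$ would give $n\gtrsim\sqrt d/c$. The centering cost is therefore precisely the part your sketch leaves out.

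Separately, you may not rescale the $E_i$ individually, since that changes the sumset; just take $r_0\le\min_i\diam(E_i)$.
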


\begin{remark}[Positive-diameter hypothesis]\label{rem:singletons}
The condition \(\diam(E_i)>0\) in Theorem~\ref{thm:main} (and later in
Proposition~\ref{prop:parametric-thick-sum}) is a genuine hypothesis rather than a harmless
normalization. A singleton summand \(E_j=\{a_j\}\) merely translates the
sum---\(E_1+\cdots+E_n=(\sum_{i\neq j}E_i)+a_j\)---but removing it also reduces
the total count of summands from~\(n\) to~\(n-1\). In a counting-based result this
matters: the remaining \(n-1\) summands may fail the threshold even if the
original~\(n\) summands satisfied it. Note moreover that singletons satisfy
\(\tau(E)=1\) vacuously under the Feng--Wu definition, as the quantifier
``\(\forall\,0<r\le\diam(E)\)'' ranges over an empty set when \(\diam(E)=0\); thus
singletons carry maximal formal thickness yet contribute no geometric content to
the sum.

Accordingly, we state the theorem only for families of positive-diameter sets;
if some singleton summands are present, the conclusions below apply provided
the subfamily of non-singleton summands already satisfies the stated threshold.
\end{remark}

\begin{remark}[Comparison with Feng--Wu]\label{rem:comparison}
For families of compact sets of positive diameter and common thickness at least~\(c\), the
currently available sufficient thresholds are the Feng--Wu bound
\(n>2^{11}c^{-3}+1\) and the quadratic bound of Theorem~\ref{thm:main}. Hence, under
the common nondegeneracy hypothesis,
\[
n>\min\!\left\{2^{11}c^{-3}+1,\
\frac{\sqrt d\,(\sqrt{1+c}+1)^2}{c^2}\right\}
\]
is a sufficient condition for \(E_1+\cdots+E_n\) to have non-empty interior.

A short computation shows that \(6\sqrt d\,c^{-2}<2^{11}c^{-3}\) whenever
\[
d<\left(\tfrac{1024}{3}\right)^2 c^{-2}\approx 1.17\times 10^5\cdot c^{-2}.
\]
Thus, when \(d\) is at most of order \(c^{-2}\), our new quadratic threshold is already
the smaller of the two; in particular, for each fixed~\(d\), our bound dominates for all
sufficiently small~\(c\).
\end{remark}

The key new ingredient in our analysis is a radius form of the Shapley--Folkman theorem
(see~\cite{AH,Starr}), a classical result in mathematical economics and convex geometry,
which asserts that the Minkowski sum of many sets is approximately convex, with an
approximation error depending on the ambient dimension and the radii of the individual
sets, but not on the number of summands. In the Feng--Wu argument, convexification is
performed sequentially, one summand at a time; each step introduces a geometric slack
(controlled by \(c\)) that is re-incurred through the scale recursion. Since there are about
\(c^{-1}\) such steps per generation, this slack accumulates through the multiscale iteration,
leading to their \(c^{-3}\) threshold. By applying Shapley--Folkman to convexify all~\(n\)
summands at once, we replace this sequential accumulation by a single additive error term
of size~\(O(\sqrt d\,r_k)\) at scale \(r_k\), which is exactly what improves the exponent
from \(3\) to \(2\).
Our proof also incorporates two further simplifications over~\cite{FW}:

\begin{enumerate}[label=\textup{(\roman*)}]
\item We never need an explicit packing or covering number bound such as Feng--Wu's
Lemma~4.1; compactness alone provides a finite discretization of each thick piece at the
required scale.
\item The support-function perturbation lemma (Lemma~\ref{lem:support-perturbation} in the sequel) gives a
clean and transparent way to pass from a finite approximation to a convex hull containing
a ball, without the volumetric estimates used in~\cite{FW}.
\end{enumerate}

\section{Key ingredients}

\subsection{A support-function perturbation lemma}

Our first lemma captures a simple but useful stability principle: if a
compact set~\(A\) has a convex hull containing a ball, and we approximate~\(A\) by a
coarser finite set~\(F\)---in the sense that every point of~\(A\) lies within
\(\varepsilon\) of some point of~\(F\)---then the convex hull of~\(F\) still contains
a ball, merely shrunk by~\(\varepsilon\) in radius.

For a non-empty compact set \(K\subset \R^d\), let
\[
h_K(u):=\sup_{x\in K}\langle x,u\rangle
\qquad (u\in \R^d)
\]
denote its support function.

\begin{lemma}\label{lem:support-perturbation}
Let \(A,F\subset \R^d\) be non-empty compact sets. Assume that
\[
A\subset F+B(0,\varepsilon)
\]
for some \(\varepsilon>0\), and that
\[
B(z,r)\subset \conv(A)
\]
for some \(z\in \R^d\) and \(r>\varepsilon\). Then
\[
B(z,r-\varepsilon)\subset \conv(F).
\]
\end{lemma}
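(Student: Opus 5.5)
The plan is to argue entirely through support functions, since containment of a ball in a compact convex set is exactly a family of inequalities on support functions. For a non-empty compact convex set \(K\) we have \(B(z,s)\subset K\) if and only if
\[
h_K(u)\ \ge\ \langle z,u\rangle + s
\qquad\text{for all unit vectors } u.
\]
This equivalence follows from the separating hyperplane theorem: if some point \(y\in B(z,s)\) lay outside \(K\), a unit \(u\) would strictly separate \(y\) from \(K\), giving \(h_K(u)<\langle y,u\rangle\le \langle z,u\rangle+s\). The converse is immediate by testing the point \(z+su\). We will apply this with \(K=\conv(F)\), which is compact and convex because \(F\) is compact in \(\R^d\).

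The main chain of inequalities runs as follows. Support functions satisfy \(h_{\conv(S)}=h_S\), they are monotone under inclusion, and they are additive under Minkowski sums. Fix a unit vector \(u\).
\begin{enumerate}[label=\textup{(\alph*)}]
\item From \(B(z,r)\subset\conv(A)\) we obtain \(h_A(u)=h_{\conv(A)}(u)\ge \langle z,u\rangle + r\).
\item From \(A\subset F+B(0,\varepsilon)\) we obtain \(h_A(u)\le h_F(u)+h_{B(0,\varepsilon)}(u)=h_F(u)+\varepsilon\).
\end{enumerate}
Combining (a) and (b) gives
\[
h_{\conv(F)}(u)=h_F(u)\ \ge\ \langle z,u\rangle + (r-\varepsilon)
\qquad\text{for every unit } u.
\]
Since \(r-\varepsilon>0\), the equivalence from the first paragraph then yields \(B(z,r-\varepsilon)\subset\conv(F)\).

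The only step needing any care is the separation direction of the equivalence, which is where we use that \(\conv(F)\) is closed. This holds because the convex hull of a compact set in \(\R^d\) is compact, by Carathéodory's theorem. Everything else is routine.
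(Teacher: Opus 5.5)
Your proof is correct and follows essentially the same support-function argument as the paper. You also spell out two steps the paper leaves implicit: the separation argument showing that domination of support functions implies containment for the closed convex set \(\conv(F)\), and the compactness of \(\conv(F)\) via Carath\'eodory.
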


\begin{proof}
Since \(A\subset F+B(0,\varepsilon)\), for every unit vector \(u\in \R^d\) we have
\[
h_A(u)\le h_F(u)+\varepsilon.
\]
As support functions are unchanged by convexification,
\begin{equation}
h_{\conv(A)}(u)\le h_{\conv(F)}(u)+\varepsilon.
\label{eq:support-upper}
\end{equation}
On the other hand, the hypothesis
\[
B(z,r)\subset \conv(A)
\]
implies that
\begin{equation}
h_{\conv(A)}(u)\ge h_{B(z,r)}(u)=\langle z,u\rangle+r
\label{eq:support-lower}
\end{equation}
for every unit vector~\(u\). Combining inequalities \eqref{eq:support-upper} and \eqref{eq:support-lower}, we obtain
\[
h_{\conv(F)}(u)\ge \langle z,u\rangle+r-\varepsilon
=h_{B(z,r-\varepsilon)}(u).
\]
Therefore \(B(z,r-\varepsilon)\subset \conv(F)\), as claimed.
\end{proof}

\subsection{Finite discretization of thickness}

The following lemma is in some sense the ``workhorse'' of our argument: it converts the thickness condition---which a priori involves the convex hull of the (possibly infinite) set \(E\cap B(x,r)\)---into a statement about finitely many points. This finite discretization is essential: the tree construction in the sequel operates on finite point configurations, so we need to know that at every location and scale, a \emph{finite} subset of~\(E\) already has a convex hull containing a nearly full-sized ball. Compactness supplies the finite approximation, and
Lemma~\ref{lem:support-perturbation} controls the radius lost in passing from the full intersection to the finite sample.

This is the only place where the thickness hypothesis enters the argument directly.

\begin{lemma}\label{lem:finite-discretization}
Let \(E\subset \R^d\) be compact with \(\diam(E)>0\) and \(\tau(E)\ge c>0\). Fix any
\(\alpha\) with
\[
0<\alpha<c.
\]
Then for every \(x\in E\) and every \(0<r\le \diam(E)\), there exist a finite set
\[
Y\subset E\cap B(x,r)
\]
and a point \(z\in \R^d\) such that
\[
B(z,\alpha r)\subset \conv(Y).
\]
\end{lemma}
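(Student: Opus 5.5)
The plan is to combine the downward-closed form of the thickness hypothesis with compactness, and then use Lemma~\ref{lem:support-perturbation} to pass to a finite subset. Fix \(x\in E\) and \(0<r\le\diam(E)\). Since \(\alpha<c\), pick an intermediate constant \(c'\) with \(\alpha<c'<c\), for instance \(c'=(\alpha+c)/2\). By the consequence of \(\tau(E)\ge c\) recorded after the definition, there is \(y\in\R^d\) with
\[
B(y,c'r)\subset\conv(A),\qquad A:=E\cap B(x,r).
\]
The set \(A\) is compact, being the intersection of the compact set \(E\) with a closed ball. It is non-empty, since \(x\in A\).

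Next set \(\varepsilon:=(c'-\alpha)r>0\). By compactness, the open cover \(\{\operatorname{int}B(a,\varepsilon):a\in A\}\) of \(A\) has a finite subcover. This gives a finite set \(Y\subset A\subset E\cap B(x,r)\) with \(A\subset Y+B(0,\varepsilon)\). The set \(Y\) is non-empty because \(A\) is.

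Finally, apply Lemma~\ref{lem:support-perturbation} with \(F=Y\), \(z=y\), and radius \(c'r\). Its hypothesis \(c'r>\varepsilon\) holds, since \(c'r-\varepsilon=\alpha r>0\). The lemma then yields
\[
B(y,\alpha r)\subset\conv(Y),
\]
which is the claim with \(z=y\).

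There is no real obstacle. The one point that needs care is the strict inequality \(\alpha<c\). It is used twice: to choose \(c'\) strictly between \(\alpha\) and \(c\), because the thickness supremum need not be attained, and to leave positive slack \(\varepsilon\) for the compactness step, so that the radius condition in Lemma~\ref{lem:support-perturbation} is satisfied.
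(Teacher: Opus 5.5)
Your proposal is correct and is essentially the paper's own proof. The paper likewise picks an intermediate constant \(\beta\in(\alpha,c)\), uses compactness of \(A=E\cap B(x,r)\) to get a finite \(Y\subset A\) with \(A\subset Y+B(0,(\beta-\alpha)r)\), and then applies Lemma~\ref{lem:support-perturbation} with \(\varepsilon=(\beta-\alpha)r\) to obtain \(B(z,\alpha r)\subset\conv(Y)\).
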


\begin{proof}
Set \(A:=E\cap B(x,r)\), which is compact because \(E\) is compact and \(B(x,r)\) is
closed. Choose any \(\beta\) with
\[
\alpha<\beta<c.
\]
Since \(\tau(E)\ge c\), the discussion following the definition of thickness implies that
there exists \(z\in \R^d\) such that
\[
B(z,\beta r)\subset \conv(A).
\]
Because \(A\) is compact, there exists a finite set \(Y\subset A\) such that
\[
A\subset Y+B\bigl(0,(\beta-\alpha)r\bigr).
\]
Applying Lemma~\ref{lem:support-perturbation} with \(\varepsilon=(\beta-\alpha)r\), we obtain
\[
B(z,\alpha r)\subset \conv(Y). \qedhere
\]
\end{proof}

\subsection{The Shapley--Folkman theorem in radius form}

The Shapley--Folkman theorem originated in the study of equilibria in economies with
non-convex preferences~(see \cite{AH,Starr}). For completeness, we give a self-contained proof of the radius form
needed here. The argument passes through the classical exceptional-summand form of
Shapley--Folkman and then uses a simple probabilistic rounding lemma.

\begin{lemma}[Conic Carath\'eodory]\label{lem:conic-caratheodory}
Let \(S\subset \R^m\), and let \(v\in \cone(S)\). Then \(v\) can be written
as a non-negative linear combination of at most \(m\) vectors of~\(S\).
\end{lemma}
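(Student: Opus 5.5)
The plan is the standard "minimal representation" proof of Carath\'eodory's theorem, carried out for cones. Since \(v\in\cone(S)\), by definition \(v=\sum_{j=1}^k \lambda_j s_j\) for some finite collection \(s_1,\dots,s_k\in S\) and coefficients \(\lambda_j\ge 0\). (If \(v=0\), the empty combination works, or any single vector with coefficient \(0\).) Among all such representations, choose one with \(k\) minimal. Minimality forces every \(\lambda_j>0\), since otherwise we could drop that term. The claim then reduces to showing \(k\le m\).

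Suppose, for contradiction, that \(k>m\). Then the vectors \(s_1,\dots,s_k\in\R^m\) are linearly dependent, so there exist \(\mu_1,\dots,\mu_k\), not all zero, with \(\sum_j \mu_j s_j=0\). Replacing \(\mu\) by \(-\mu\) if necessary, we may assume some \(\mu_j>0\). For \(t\ge 0\), consider
\[
v=\sum_{j=1}^k(\lambda_j-t\mu_j)s_j .
\]
Take
\[
t^*=\min\{\lambda_j/\mu_j:\mu_j>0\},
\]
which is finite and positive. Then every coefficient \(\lambda_j-t^*\mu_j\) is non-negative:
\begin{itemize}
\item for indices with \(\mu_j\le 0\), the coefficient is at least \(\lambda_j>0\);
\item for indices with \(\mu_j>0\), it is non-negative by the choice of \(t^*\).
\end{itemize}
Moreover, at least one coefficient vanishes, namely at the index attaining the minimum. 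Discarding that term gives a representation of \(v\) as a non-negative combination of \(k-1\) vectors of \(S\), contradicting minimality. Hence \(k\le m\).

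There is no real obstacle here. The only care needed is the sign normalization of \(\mu\), which guarantees the minimum defining \(t^*\) is over a non-empty set, and the degenerate case \(v=0\). In later use, this lemma will presumably be applied in \(\R^{d+n}\) to lifted vectors \((x,e_i)\), which yields the exceptional-summand form of Shapley--Folkman. That application is not needed for the present statement.
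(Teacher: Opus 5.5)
Your proof is correct and follows the paper's argument essentially verbatim. Like the paper, you take a representation with the minimal number of positive coefficients, invoke linear dependence when more than \(m\) vectors appear, and subtract \(t=\min_{\mu_j>0}\lambda_j/\mu_j\) times the dependence relation to eliminate a term. Your explicit normalization \(\mu\mapsto-\mu\) is a small improvement, since the paper simply asserts that some coefficient of the dependence is positive.
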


\begin{proof}
If \(v=0\), there is nothing to prove. So we assume \(v\neq 0\) and choose a representation
\[
v=\sum_{j=1}^N \lambda_j s_j,
\qquad
\lambda_j>0,\ s_j\in S,
\]
with \(N\) minimal. If \(N>m\), then \(s_1,\dots,s_N\) are linearly dependent, so there
exist real numbers \(a_1,\dots,a_N\), not all zero, with \(\sum_{j=1}^N a_j s_j=0\). At
least one \(a_j\) is positive. Let
\[
t:=\min_{a_j>0}\frac{\lambda_j}{a_j}>0.
\]
Then \(\lambda_j':=\lambda_j-ta_j\ge 0\) for all~\(j\), and \(\lambda_j'=0\) for at least
one index. Moreover, \(v=\sum_{j=1}^N \lambda_j' s_j\). Discarding zero coefficients
yields a representation with fewer than \(N\) terms, contradicting minimality. Hence
\(N\le m\).
\end{proof}

We first recall the classical form of Shapley--Folkman.

\begin{lemma}[Shapley--Folkman, classical form]\label{lem:shapley-folkman-classical}
Let \(A_1,\dots,A_n\subset \R^d\) be non-empty compact sets. For every
\[
x\in \conv(A_1)+\cdots+\conv(A_n)
\]
there exists a set \(I\subset \{1,\dots,n\}\) with \(|I|\le d\) such that
\[
x\in \sum_{i\notin I} A_i+\sum_{i\in I}\conv(A_i).
\]
\end{lemma}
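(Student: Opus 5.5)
We prove Lemma~\ref{lem:shapley-folkman-classical} by lifting to \(\R^{d+n}\) and applying Lemma~\ref{lem:conic-caratheodory}, together with a counting argument.

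\textbf{Setup.} Write \(x=\sum_{i=1}^n y_i\) with \(y_i\in\conv(A_i)\). By ordinary Carath\'eodory (or directly from the definition of the convex hull), each \(y_i\) is a finite convex combination
\[
y_i=\sum_{j}\lambda_{ij}a_{ij},\qquad a_{ij}\in A_i,\ \lambda_{ij}\ge 0,\ \sum_j\lambda_{ij}=1 .
\]
Let \(e_1,\dots,e_n\) be the standard basis of \(\R^n\). Consider the lifted vectors \((a,e_i)\in\R^{d+n}\) for \(a\in A_i\), and the target vector \(v:=(x,1,\dots,1)=\sum_{i,j}\lambda_{ij}(a_{ij},e_i)\). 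Thus \(v\) lies in the cone generated by the finite set \(S:=\{(a_{ij},e_i)\}\).

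\textbf{Reduction via Carath\'eodory.} By Lemma~\ref{lem:conic-caratheodory} in dimension \(m=d+n\), we can rewrite \(v=\sum_{s\in S'}\mu_s s\) with \(\mu_s>0\) and \(|S'|\le d+n\). Reading off the last \(n\) coordinates, for each \(i\) the coefficients \(\mu_s\) of the vectors in \(S'\) with block index \(i\) sum to \(1\). In particular, each index \(i\) is represented by at least one vector of \(S'\). Reading off the first \(d\) coordinates, \(x=\sum_i x_i'\), where \(x_i'\) is a convex combination of points of \(A_i\), so \(x_i'\in\conv(A_i)\).

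\textbf{Counting.} Let \(k_i\ge 1\) denote the number of vectors of \(S'\) with block index \(i\). Since \(\sum_i k_i\le d+n\), the set \(I:=\{i: k_i\ge 2\}\) satisfies
\[
|I|\le\sum_i(k_i-1)\le d .
\]
For \(i\notin I\) we have \(k_i=1\), so that single coefficient equals \(1\) and \(x_i'\in A_i\). For \(i\in I\) we keep \(x_i'\in\conv(A_i)\). This gives exactly
\[
x\in\sum_{i\notin I}A_i+\sum_{i\in I}\conv(A_i).
\]

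\textbf{Main subtlety.} The step needing care is ensuring every block index \(i\) survives the Carath\'eodory reduction, so that \(k_i\ge 1\). This is automatic because the \(e_i\)-coordinate of \(v\) equals \(1\), which forces at least one positive coefficient in each block. Compactness of the \(A_i\) is not actually used; only non-emptiness matters.
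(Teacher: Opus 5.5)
Your proof is correct and follows essentially the same route as the paper: you lift to \(\R^{d+n}\) via \(a\mapsto(a,e_i)\), apply Lemma~\ref{lem:conic-caratheodory}, and count that at most \(d\) block indices can occur more than once. The only difference is cosmetic: you take \(S\) to be the finite set of points appearing in the chosen convex combinations, whereas the paper uses all of \(\bigcup_i A_i\times\{e_i\}\), and this does not change the argument.
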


\begin{proof}
Write \(x=x_1+\cdots+x_n\) with \(x_i\in \conv(A_i)\). Let \(e_1,\dots,e_n\) be the
standard basis of~\(\R^n\), and define
\[
S:=\bigcup_{i=1}^n \bigl(A_i\times \{e_i\}\bigr)\subset \R^{d+n}.
\]
Then \((x,e_1+\cdots+e_n)\in \cone(S)\). By Lemma~\ref{lem:conic-caratheodory}, there
exist indices \(i_1,\dots,i_m\in \{1,\dots,n\}\), points
\(a_\ell\in A_{i_\ell}\) for \(1\le \ell\le m\), and coefficients \(\lambda_\ell\ge 0\),
with \(m\le d+n\), such that
\[
(x,e_1+\cdots+e_n)=\sum_{\ell=1}^m \lambda_\ell (a_\ell,e_{i_\ell}).
\]
For each \(i\in \{1,\dots,n\}\), the \(i\)-th auxiliary coordinate gives
\[
\sum_{\ell:\, i_\ell=i}\lambda_\ell=1.
\]
Therefore each index~\(i\) appears at least once among \(i_1,\dots,i_m\). Since
\(m\le d+n\), at most \(d\) indices can appear more than once. Let \(I\) be the set of
indices appearing more than once, so \(|I|\le d\).

For each \(i\notin I\), there is exactly one \(\ell\) with \(i_\ell=i\), and its coefficient is
\(1\); denote the corresponding point of \(A_i\) by \(a_i\). For each \(i\in I\), set
\[
y_i:=\sum_{\ell:\, i_\ell=i}\lambda_\ell a_\ell.
\]
Then \(y_i\in \conv(A_i)\). Collecting terms gives
\[
x=\sum_{i\notin I} a_i+\sum_{i\in I} y_i
\in \sum_{i\notin I} A_i+\sum_{i\in I}\conv(A_i). \qedhere
\]
\end{proof}

The next lemma is the Euclidean-radius ingredient that sharpens the usual diameter-based
estimate.

\begin{lemma}[Rounding a convexified sum in radius form]\label{lem:radius-rounding}
Let \(A_1,\dots,A_m\subset \R^d\) be non-empty compact sets, and suppose that for some
points \(c_1,\dots,c_m\in \R^d\) and some \(R\ge 0\),
\[
A_i\subset B(c_i,R)
\qquad (1\le i\le m).
\]
Then for every choice of points
\[
y_i\in \conv(A_i)
\qquad (1\le i\le m),
\]
there exist points \(a_i\in A_i\) such that
\[
\left|\sum_{i=1}^m y_i-\sum_{i=1}^m a_i\right|
\le R\sqrt m.
\]
\end{lemma}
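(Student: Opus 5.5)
The plan is a probabilistic rounding argument based on independent random choices and the variance of a sum of independent vectors.

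First I reduce to finitely supported distributions. Each \(y_i\in\conv(A_i)\) is, by Carath\'eodory's theorem (or directly by the definition of the convex hull), a finite convex combination
\[
y_i=\sum_{j}\lambda_{ij}a_{ij},\qquad a_{ij}\in A_i,\ \lambda_{ij}\ge 0,\ \sum_j\lambda_{ij}=1.
\]
Let \(X_1,\dots,X_m\) be independent random vectors, with \(X_i\) taking the value \(a_{ij}\) with probability \(\lambda_{ij}\). Then \(X_i\in A_i\) almost surely and \(\mathbb{E}X_i=y_i\).

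Next I compute the second moment of the deviation \(S:=\sum_i (X_i-y_i)\). By independence and mean zero, the cross terms \(\mathbb{E}\langle X_i-y_i,X_k-y_k\rangle\) vanish for \(i\neq k\). Hence
\[
\mathbb{E}|S|^2=\sum_{i=1}^m \mathbb{E}|X_i-y_i|^2 .
\]
For each \(i\), the mean minimizes the expected squared distance, so
\[
\mathbb{E}|X_i-y_i|^2=\mathbb{E}|X_i-c_i|^2-|y_i-c_i|^2\le \mathbb{E}|X_i-c_i|^2\le R^2,
\]
since \(X_i\in A_i\subset B(c_i,R)\). This gives \(\mathbb{E}|S|^2\le mR^2\).

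Finally, a random variable cannot exceed its mean with probability one. So some outcome of the finite probability space satisfies \(|S|^2\le mR^2\), and setting \(a_i:=X_i\) at that outcome yields
\[
\Bigl|\sum_i y_i-\sum_i a_i\Bigr|\le R\sqrt m.
\]
I don't expect a serious obstacle. The only point needing care is the variance bound relative to the centers \(c_i\) rather than the means \(y_i\), and that is handled by the bias–variance identity above. The finite support of the \(X_i\) keeps everything elementary, with no measurability issues.
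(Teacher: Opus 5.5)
Your proposal is correct and follows essentially the same argument as the paper: write each \(y_i\) as a finite convex combination, take independent random points with those means, and use \(\mathbb{E}|S|^2=\sum_i\mathbb{E}|X_i-y_i|^2\le mR^2\) to pick a good realization. The only cosmetic difference is that the paper first translates so that each \(c_i=0\), whereas you apply the bias--variance identity relative to \(c_i\) directly; the two are equivalent.
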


\begin{proof}
Replacing each \(A_i\) by \(A_i-c_i\), each \(y_i\) by \(y_i-c_i\), and later translating
back, we may assume that \(c_i=0\) for all~\(i\). Thus
\[
A_i\subset B(0,R)
\qquad (1\le i\le m).
\]

Fix \(i\). Since \(y_i\in \conv(A_i)\), we have \((y_i,1)\in \cone(A_i\times\{1\})\subset\R^{d+1}\).
By Lemma~\ref{lem:conic-caratheodory}, there exist \(a_{ij}\in A_i\) and \(\lambda_{ij}\ge 0\) such that
\[
(y_i,1)=\sum_{j=1}^{N_i}\lambda_{ij}(a_{ij},1).
\]
Comparing the last coordinates gives \(\sum_{j=1}^{N_i}\lambda_{ij}=1\), and comparing the first \(d\)
coordinates gives \(y_i=\sum_{j=1}^{N_i}\lambda_{ij}a_{ij}\).

Let \(X_i\) be an \(\R^d\)-valued random variable taking the value \(a_{ij}\) with
probability \(\lambda_{ij}\), and assume that \(X_1,\dots,X_m\) are independent. Then
\[
\mathbb{E}[X_i]=y_i.
\]
Also, we have
\[
\mathbb{E}|X_i-y_i|^2
=
\mathbb{E}|X_i|^2-|y_i|^2
\le
\mathbb{E}|X_i|^2
\le R^2.
\]
Since the random vectors \(X_i-y_i\) are independent and have mean zero,
\[
\mathbb{E}\left|\sum_{i=1}^m (X_i-y_i)\right|^2
=
\sum_{i=1}^m \mathbb{E}|X_i-y_i|^2
\le mR^2.
\]
Therefore there exists a realization \(a_i\in A_i\) of the \(X_i\) such that
\[
\left|\sum_{i=1}^m y_i-\sum_{i=1}^m a_i\right|
=
\left|\sum_{i=1}^m (y_i-a_i)\right|
\le R\sqrt m. \qedhere
\]
\end{proof}

We can now state and prove the radius form of Shapley--Folkman used below.

For a non-empty bounded set \(A\subset \R^d\), we define the radius by
\[
\rad(A):=\inf_{x\in\R^d}\sup_{a\in A}|a-x|;
\]
for compact \(A\), the infimum is attained because the function
\(x\mapsto \sup_{a\in A}|a-x|\) is continuous and tends to infinity as \(|x|\to\infty\).

\begin{theorem}[Shapley--Folkman, radius form]\label{thm:shapley-folkman-radius}
Let \(A_1,\dots,A_n\subset \R^d\) be non-empty compact sets, and set
\[
R:=\max_{1\le i\le n}\rad(A_i).
\]
Then for every
\[
x\in \conv(A_1)+\cdots+\conv(A_n)
=
\conv(A_1+\cdots+A_n),
\]
there exist points \(a_i\in A_i\) such that
\[
\left|x-(a_1+\cdots+a_n)\right|
\le
R\sqrt{\min\{n,d\}}.
\]
\end{theorem}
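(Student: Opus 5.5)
The plan is to combine the classical form (Lemma~\ref{lem:shapley-folkman-classical}) with the rounding lemma (Lemma~\ref{lem:radius-rounding}), treating the two regimes \(n\le d\) and \(n>d\) uniformly.

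First we record the identity \(\conv(A_1)+\cdots+\conv(A_n)=\conv(A_1+\cdots+A_n)\) asserted in the statement. The inclusion \(\supseteq\) holds because the left side is convex and contains \(A_1+\cdots+A_n\). For \(\subseteq\), write each \(x_i\in\conv(A_i)\) as a convex combination \(x_i=\sum_j\lambda_{ij}a_{ij}\). The point \(\sum_i x_i\) is then the expectation of \(\sum_i X_i\), where the \(X_i\) are independent and \(X_i\) takes the value \(a_{ij}\) with probability \(\lambda_{ij}\). That random sum takes finitely many values in \(A_1+\cdots+A_n\), so its expectation lies in the convex hull. For each \(i\) we also fix a center \(c_i\) attaining \(\rad(A_i)\), which is possible by compactness as noted before the theorem. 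Then \(A_i\subset B(c_i,\rad(A_i))\subset B(c_i,R)\).

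Now fix \(x\in\conv(A_1)+\cdots+\conv(A_n)\).

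\textbf{Case \(n\le d\).} Write \(x=y_1+\cdots+y_n\) with \(y_i\in\conv(A_i)\) and apply Lemma~\ref{lem:radius-rounding} directly with \(m=n\). This gives points \(a_i\in A_i\) with \(|x-\sum a_i|\le R\sqrt n=R\sqrt{\min\{n,d\}}\).

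\textbf{Case \(n>d\).} Apply Lemma~\ref{lem:shapley-folkman-classical} to obtain \(I\) with \(|I|\le d\) and
\[
x=\sum_{i\notin I}a_i+\sum_{i\in I}y_i,\qquad a_i\in A_i,\ y_i\in\conv(A_i).
\]
If \(I=\emptyset\) we are done with error \(0\). Otherwise apply Lemma~\ref{lem:radius-rounding} to the subfamily \(\{A_i\}_{i\in I}\), with \(m=|I|\) and the same centers \(c_i\) and radius \(R\). This produces \(a_i\in A_i\) for \(i\in I\) with \(|\sum_{i\in I}y_i-\sum_{i\in I}a_i|\le R\sqrt{|I|}\le R\sqrt d\). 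Adding back the untouched terms \(a_i\) for \(i\notin I\), the total error \(|x-(a_1+\cdots+a_n)|\) equals the error over \(I\), so it is at most \(R\sqrt d=R\sqrt{\min\{n,d\}}\).

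There is no serious obstacle, since both ingredients are already proved. The only points needing care are the following:
\begin{itemize}
\item the radius of each \(A_i\) must be attained by a center \(c_i\), so that Lemma~\ref{lem:radius-rounding} applies with the common bound \(R\);
\item the bound \(R\sqrt{|I|}\) is monotone in \(|I|\), so the estimate \(|I|\le d\) from the classical form can be used directly;
\item the convex-hull identity above is justified.
\end{itemize}
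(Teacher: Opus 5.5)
Your proof is correct and follows the paper's route: use the classical Shapley--Folkman lemma to isolate at most \(d\) convexified summands, then apply the probabilistic rounding lemma with centers attaining \(\rad(A_i)\). The case split is harmless but unnecessary, since \(I\subset\{1,\dots,n\}\) already gives \(|I|\le\min\{n,d\}\); your justification of the identity \(\conv(A_1)+\cdots+\conv(A_n)=\conv(A_1+\cdots+A_n)\) fills in a step the paper only asserts.
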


\begin{proof}
By Lemma~\ref{lem:shapley-folkman-classical}, there exists a set \(I\subset \{1,\dots,n\}\) with
\(|I|\le d\) such that
\[
x\in \sum_{i\notin I} A_i+\sum_{i\in I}\conv(A_i).
\]
Thus, we may write
\[
x=\sum_{i\notin I} a_i+\sum_{i\in I} y_i
\]
with \(a_i\in A_i\) for \(i\notin I\) and \(y_i\in \conv(A_i)\) for \(i\in I\).

If \(I=\varnothing\), then \(x\in A_1+\cdots+A_n\) and there is nothing to prove. So
assume \(I\neq\varnothing\), and let \(q:=|I|\). Since each \(A_i\) is compact, there
exists \(c_i\in \R^d\) such that
\[
A_i\subset B(c_i,\rad(A_i))\subset B(c_i,R)
\qquad (i\in I).
\]
Applying Lemma~\ref{lem:radius-rounding} to the family \((A_i)_{i\in I}\), we obtain points
\(a_i\in A_i\) for \(i\in I\) such that
\[
\left|\sum_{i\in I} y_i-\sum_{i\in I} a_i\right|
\le R\sqrt q.
\]
Hence, we have
\[
\left|x-\sum_{i=1}^n a_i\right|
=
\left|\sum_{i\in I} y_i-\sum_{i\in I} a_i\right|
\le R\sqrt q
\le R\sqrt{\min\{n,d\}},
\]
as claimed.
\end{proof}

The preceding theorem immediately yields a quantitative ``almost convexity''
estimate for Minkowski sums.

\begin{corollary}\label{cor:almost-convex}
Let \(A_1,\dots,A_n\subset \R^d\) be non-empty compact sets, and suppose that for some
points \(c_1,\dots,c_n\in \R^d\) and some \(R\ge 0\),
\[
A_i\subset B(c_i,R)
\qquad (1\le i\le n).
\]
Then, we have
\[
\conv(A_1)+\cdots+\conv(A_n)
=
\conv(A_1+\cdots+A_n)
\subset
A_1+\cdots+A_n+B\bigl(0,R\sqrt{\min\{n,d\}}\bigr);
\]
in particular,
\[
\conv(A_1)+\cdots+\conv(A_n)
\subset
A_1+\cdots+A_n+B(0,R\sqrt d).
\]
\end{corollary}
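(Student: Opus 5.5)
The corollary follows from Theorem~\ref{thm:shapley-folkman-radius}, together with the standard identity for convex hulls of Minkowski sums. There are three steps.

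\textbf{Step 1: the identity.} I will first prove
\[
\conv(A_1)+\cdots+\conv(A_n)=\conv(A_1+\cdots+A_n).
\]
For ``$\supseteq$'': the left side is a sum of convex sets, hence convex, and it contains $A_1+\cdots+A_n$.

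For ``$\subseteq$'': fix points $y_i=\sum_j\lambda_{ij}a_{ij}$ in $\conv(A_i)$. The point $y_2+\cdots+y_n$ is fixed, and $y_1$ is a convex combination of the points $a_{1j}+y_2+\cdots+y_n$. So it suffices to vary one summand at a time, that is, to prove by induction on $k$ that
\[
\conv(A_1)+\cdots+\conv(A_k)+A_{k+1}+\cdots+A_n\subset\conv(A_1+\cdots+A_n).
\]
For the inductive step, take a point whose $k$-th summand lies in $\conv(A_k)$. It is a convex combination of points whose $k$-th summand lies in $A_k$, and all of these lie in the convex set $\conv(A_1+\cdots+A_n)$ by the induction hypothesis.

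Alternatively, one can compare support functions: both sides are compact convex sets whose support function is $\sum_i h_{A_i}$.

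\textbf{Step 2: the inclusion.} Take $x$ in this common set. We have $\rad(A_i)\le R$, because $\sup_{a\in A_i}|a-c_i|\le R$ and $\rad$ is an infimum over centers. Hence $\max_i\rad(A_i)\le R$.

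Theorem~\ref{thm:shapley-folkman-radius} then gives points $a_i\in A_i$ with
\[
|x-(a_1+\cdots+a_n)|\le R\sqrt{\min\{n,d\}}.
\]
Equivalently,
\[
x\in A_1+\cdots+A_n+B\bigl(0,R\sqrt{\min\{n,d\}}\bigr).
\]

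\textbf{Step 3: the final claim.} Since $\min\{n,d\}\le d$, the ball of radius $R\sqrt{\min\{n,d\}}$ is contained in $B(0,R\sqrt d)$, which gives the last inclusion.

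\textbf{Main obstacle.} There is no real obstacle here. The only step needing care is the identity in Step 1, which the statement of Theorem~\ref{thm:shapley-folkman-radius} already asserts in passing. I will include the one-summand-at-a-time convexity argument above to make it self-contained.
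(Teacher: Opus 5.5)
Your proposal is correct and matches the paper's proof: the paper also just notes $\rad(A_i)\le R$ and applies Theorem~\ref{thm:shapley-folkman-radius}, and your weakening of $\min\{n,d\}$ to $d$ gives the final inclusion. Your one-summand-at-a-time induction (or the support-function comparison) for the identity $\conv(A_1)+\cdots+\conv(A_n)=\conv(A_1+\cdots+A_n)$ is valid; the paper takes this standard fact for granted, so including it only makes the argument more self-contained.
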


\begin{proof}
Since \(A_i\subset B(c_i,R)\), we have \(\rad(A_i)\le R\) for each~\(i\). The claim is
therefore immediate from Theorem~\ref{thm:shapley-folkman-radius}.
\end{proof}

\begin{remark}\label{rem:almost-convex-coarse}
The conclusion of Corollary~\ref{cor:almost-convex} is stated in a deliberately simple common-radius form.
The theorem actually yields the sharper residual \(R\sqrt{\min\{n,d\}}\), and one could
propagate that sharper quantity through the later tree argument. We use the coarser
bound \(R\sqrt d\) because it keeps the later inequalities transparent and already yields
the desired quadratic dependence on~\(c^{-1}\) together with a square-root dependence
on~\(d\).

More generally, the estimate \(R\sqrt d\) is not intended as a sharp approximation
constant for arbitrary compact sets; more refined approximate-convexity bounds are
available (see, e.g.,~\cite{Starr,Budish,WuTang}). We use the radius-based form presented here because
it is transparent and sufficient for our purposes.
\end{remark}

\subsection{Absorption of residuals}

Our next lemma is the key mechanism that connects the radius-form approximate convexity
estimate to the tree construction in the sequel. The setup is as follows: we have
\(n\) finite point clouds \(A_1,\dots,A_n\), each of whose convex hulls contains a ball
of radius~\(q\), and each of which lies in some ball of radius~\(R\). We would like to
conclude that the Minkowski sum of those balls sits inside the Minkowski sum of the point
clouds themselves---but of course it need not, since the point clouds are not convex.
Corollary~\ref{cor:almost-convex} tells us that the gap between the sum of the convex
hulls and the sum of the original sets is at most \(R\sqrt d\), an additive error
depending on the dimension and the common radius bound but not on~\(n\). This error is a
fixed cost, and there are \(n\) summands available to absorb it: if we fatten each point
cloud by a small radius~\(Q\), then the \(n\) copies of \(B(0,Q)\) collectively produce a
ball of radius~\(nQ\). Provided that \(nQ\ge R\sqrt d\), the collective fattening
swallows the Shapley--Folkman residual. Each summand bears only a \(\frac{1}{n}\)-share of the
dimensional cost, which is what ultimately allows the threshold on~\(n\) to scale as
\(\frac{\sqrt d}{c^2}\) rather than \(\frac{1}{c^3}\).

\begin{lemma}[One-step absorption]\label{lem:one-step-absorption}
Let \(A_1,\dots,A_n\subset \R^d\) be non-empty finite sets, and let
\[
b_1,\dots,b_n,c_1,\dots,c_n\in \R^d.
\]
Suppose that for some \(q,R>0\),
\[
B(b_i,q)\subset \conv(A_i)
\quad\text{and}\quad
A_i\subset B(c_i,R)
\qquad (1\le i\le n),
\]
and moreover assume that
\[
R\sqrt d\le nQ
\]
for some \(Q>0\). Then, we have
\[
B(b_1,q)+\cdots+B(b_n,q)
\subset
\left(\bigcup_{x_1\in A_1} B(x_1,Q)\right)+\cdots+
\left(\bigcup_{x_n\in A_n} B(x_n,Q)\right).
\]
\end{lemma}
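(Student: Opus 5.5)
The plan is to prove the inclusion pointwise. Take an arbitrary point of the left-hand side and write it as
\[
p=\sum_{i=1}^n (b_i+u_i),\qquad |u_i|\le q .
\]
Since \(B(b_i,q)\subset\conv(A_i)\), each \(b_i+u_i\) lies in \(\conv(A_i)\). Hence
\[
p\in \conv(A_1)+\cdots+\conv(A_n).
\]
The \(A_i\) are finite and hence compact, and they satisfy \(A_i\subset B(c_i,R)\). So Corollary~\ref{cor:almost-convex} (in its coarse form) applies. It yields points \(a_i\in A_i\) such that
\[
\Bigl|p-\sum_{i=1}^n a_i\Bigr|\le R\sqrt d .
\]
One can equally invoke Theorem~\ref{thm:shapley-folkman-radius} directly, using \(\rad(A_i)\le R\) and \(\sqrt{\min\{n,d\}}\le\sqrt d\).

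Next we spread the residual evenly across the summands. Set
\[
w:=p-\sum_{i=1}^n a_i,\qquad v_i:=w/n \quad(1\le i\le n).
\]
By the hypothesis \(R\sqrt d\le nQ\),
\[
|v_i|=\frac{|w|}{n}\le \frac{R\sqrt d}{n}\le Q .
\]
Thus \(a_i+v_i\in B(a_i,Q)\subset\bigcup_{x_i\in A_i}B(x_i,Q)\) for each \(i\). Summing gives
\[
p=\sum_{i=1}^n (a_i+v_i),
\]
which shows that \(p\) lies in the right-hand side. This is just the Minkowski identity \(n\,B(0,Q)=B(0,nQ)\), where the \(n\) copies of the ball of radius \(Q\) together absorb the Shapley--Folkman residual.

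There is no genuine obstacle here, because the substantive content is already in Corollary~\ref{cor:almost-convex}. The only points that need care are these:
\begin{enumerate}[label=\textup{(\roman*)}]
\item the identity \(\conv(A_1)+\cdots+\conv(A_n)=\conv(A_1+\cdots+A_n)\) is not needed in the argument;
\item the residual bound is independent of \(n\), which is what makes dividing by \(n\) effective;
\item the radii \(q\) and \(R\) play no role beyond the two containment hypotheses.
\end{enumerate}
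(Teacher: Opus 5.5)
Your proof is correct and is essentially the paper's argument written pointwise. The paper chains the set inclusions $B(b_1,q)+\cdots+B(b_n,q)\subset\conv(A_1)+\cdots+\conv(A_n)\subset A_1+\cdots+A_n+B(0,R\sqrt d)\subset A_1+\cdots+A_n+B(0,nQ)$, and your even split $v_i=w/n$ is an explicit witness for the identity $B(0,nQ)=B(0,Q)+\cdots+B(0,Q)$ that the paper uses in its final step.
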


\begin{proof}
From the hypothesis that \(B(b_i,q)\subset \conv(A_i)\) for each \(i\), we obtain
\[
B(b_1,q)+\cdots+B(b_n,q)
=
B(b_1+\cdots+b_n,nq)
\subset
\conv(A_1)+\cdots+\conv(A_n).
\]
By Corollary~\ref{cor:almost-convex},
\[
\conv(A_1)+\cdots+\conv(A_n)
\subset
A_1+\cdots+A_n+B(0,R\sqrt d).
\]
Since \(R\sqrt d\le nQ\),
\[
A_1+\cdots+A_n+B(0,R\sqrt d)
\subset
A_1+\cdots+A_n+B(0,nQ).
\]
Finally, we note that
\[
B(0,nQ)=\underbrace{B(0,Q)+\cdots+B(0,Q)}_{n\text{ times}},
\]
so we have
\begin{align*}
A_1+\cdots+A_n+B(0,nQ)
&=
(A_1+B(0,Q))+\cdots+(A_n+B(0,Q))\\
&=
\left(\bigcup_{x_1\in A_1} B(x_1,Q)\right)+\cdots+
\left(\bigcup_{x_n\in A_n} B(x_n,Q)\right).
\end{align*}
Combining the preceding series of inclusions proves the claim.
\end{proof}

\section{A parameterized quantitative thick-sum theorem}

We now prove the main inductive statement. The genuinely new ingredient has already been
isolated in Lemma~\ref{lem:one-step-absorption}; the recursive tree construction below serves to
produce, at each generation, finite point clouds to which that one-step absorption lemma
can be applied.

\begin{proposition}\label{prop:parametric-thick-sum}
Let \(E_1,\dots,E_n\subset \R^d\) be compact with \(\diam(E_i)>0\), and assume
\[
\tau(E_i)\ge c>0
\qquad (1\le i\le n);
\]
fix parameters \(\alpha,\lambda\) with
\[
0<\lambda<\alpha<c.
\]
If we have
\begin{equation}
n>\frac{\sqrt d\,(1+\lambda)}{\lambda(\alpha-\lambda)},
\label{eq:parametric-bound}
\end{equation}
then \(E_1+\cdots+E_n\) has non-empty interior.
\end{proposition}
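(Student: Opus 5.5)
The plan is a multiscale tree argument. The invariant at every scale is a ball of radius \(n(\alpha-\lambda)r_k\) centred at a sum of ball-centres supplied by thickness, and Lemma~\ref{lem:one-step-absorption} passes from one generation to the next. Fix \(r_0\le\min_i\diam(E_i)\) and put \(r_k:=\lambda^k r_0\). Call a pair \((y,b)\) with \(y\in E_i\) \emph{admissible at scale \(r\)} if there is a finite \(Y\subset E_i\cap B(y,r)\) with \(B(b,\alpha r)\subset\conv(Y)\). Lemma~\ref{lem:finite-discretization} provides such a \(Y\) for every \(y\in E_i\) and every \(0<r\le\diam(E_i)\). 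In particular \(|b-y|\le r\), since \(B(b,\alpha r)\subset\conv(Y)\subset B(y,r)\). The claim is that a fixed ball \(B(\sum_i b_i^{(0)},n(\alpha-\lambda)r_0)\), built from admissible pairs at scale \(r_0\), lies in \(E_1+\cdots+E_n\).

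\textbf{The inductive step.} Suppose \((y_i,b_i)\) is admissible at scale \(r_k\) with cloud \(Y_i\), for each \(i\).
\begin{enumerate}[label=\textup{(\alph*)}]
\item For each \(y'\in Y_i\), apply Lemma~\ref{lem:finite-discretization} at \(y'\) with scale \(r_{k+1}=\lambda r_k\le\diam(E_i)\). This gives a child cloud \(Y_{y'}\) and centre \(b_{y'}\), with \(|b_{y'}-y'|\le\lambda r_k\).
\item Set \(Y_i^b:=\{b_{y'}:y'\in Y_i\}\), a finite set. Then \(Y_i\subset Y_i^b+B(0,\lambda r_k)\), so Lemma~\ref{lem:support-perturbation} gives \(B(b_i,(\alpha-\lambda)r_k)\subset\conv(Y_i^b)\). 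Also \(Y_i^b\subset B(y_i,(1+\lambda)r_k)\).
\item Apply Lemma~\ref{lem:one-step-absorption} to the clouds \(Y_i^b\) with \(q=(\alpha-\lambda)r_k\), \(R=(1+\lambda)r_k\) and \(Q=\lambda(\alpha-\lambda)r_k=(\alpha-\lambda)r_{k+1}\). Its hypothesis \(R\sqrt d\le nQ\) reads \(\sqrt d(1+\lambda)\le n\lambda(\alpha-\lambda)\), which is exactly \eqref{eq:parametric-bound}.
\item The conclusion says: for every \(p\in B(\sum_i b_i,n(\alpha-\lambda)r_k)\) there are \(y_i'\in Y_i\) with
\[
p\in B\Bigl(\textstyle\sum_i b_{y_i'},\,n(\alpha-\lambda)r_{k+1}\Bigr).
\]
Each new pair \((y_i',b_{y_i'})\) is admissible at scale \(r_{k+1}\), so the invariant holds again one generation down, and \(|y_i'-y_i|\le r_k\).
\end{enumerate}

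\textbf{Passing to the limit.} Fix \(p\) in the initial ball and iterate. This produces sequences \(y_i^{(k)}\in E_i\) and centres \(b_i^{(k)}\) with \(|b_i^{(k)}-y_i^{(k)}|\le r_k\) and \(|p-\sum_i b_i^{(k)}|\le n(\alpha-\lambda)r_k\). Hence \(|p-\sum_i y_i^{(k)}|\le n(1+\alpha-\lambda)r_k\to 0\). Since \(|y_i^{(k+1)}-y_i^{(k)}|\le r_k\) and \(\sum_k r_k<\infty\), each sequence \(y_i^{(k)}\) is Cauchy. Its limit lies in \(E_i\) because \(E_i\) is closed, so \(p\in E_1+\cdots+E_n\). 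Thus the whole ball \(B(\sum_i b_i^{(0)},n(\alpha-\lambda)r_0)\), which has positive radius, lies in the sum. The choices at each step are made pointwise for the given \(p\), by finitely many applications of the lemmas, so no measurable-selection issue arises.

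\textbf{Expected obstacle.} The delicate point is step (b): matching the fattening in Lemma~\ref{lem:one-step-absorption} with the next generation's balls. Those balls are centred at \(b_{y'}\), not at the tree points \(y'\in E_i\), and a ball centred at a point of \(E_i\) need not sit inside any nearby convex hull. The remedy is to apply the absorption lemma to the clouds of ball-centres \(Y_i^b\) rather than to \(Y_i\). The support-perturbation lemma charges this only a loss of \(\lambda r_k\) in radius, giving \(q=(\alpha-\lambda)r_k\), and a gain of \(\lambda r_k\) in enclosing radius, giving \(R=(1+\lambda)r_k\). I expect the bookkeeping here to be the main thing to get exactly right. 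The choice \(Q=(\alpha-\lambda)r_{k+1}\) is what makes the invariant reproduce itself with the same constant, and it is also why the threshold in \eqref{eq:parametric-bound} comes out exactly as stated.
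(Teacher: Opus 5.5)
Your proposal is correct and follows essentially the paper's argument: the same recursive use of Lemma~\ref{lem:finite-discretization}, the same passage from the point clouds to the clouds of ball-centres via Lemma~\ref{lem:support-perturbation} (yielding $q_k=(\alpha-\lambda)r_k$ and $R_k=(1+\lambda)r_k$), and the same application of Lemma~\ref{lem:one-step-absorption} with $Q=(\alpha-\lambda)r_{k+1}$. The only difference is presentational: you follow a single branch for each fixed $p$ and finish with Cauchy sequences, whereas the paper proves the global monotonicity $H_k\subset H_{k+1}$ and then uses $H_k\subset E_1+\cdots+E_n+B(0,n(1-\lambda)r_k)$ together with closedness of the sumset.
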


\begin{proof}
Set
\[
r_0:=\min_{1\le i\le n}\diam(E_i)>0,
\qquad
r_k:=\lambda^k r_0
\quad (k\ge 0).
\]
For each \(i\in\{1,\dots,n\}\) we recursively construct a rooted tree
\[
\Tau^i=\bigsqcup_{k\ge 0}\Tau_k^i,
\qquad
\Tau_0^i=\{\varnothing\},
\]
where \(\Tau_k^i\) is the set of vertices at depth~\(k\) (the levels being pairwise
disjoint). To each vertex \(v\in \Tau_k^i\) we associate a point \(x_v^i\in E_i\) and a
point \(z_v^i\in \R^d\), as follows.

Choose arbitrary root points \(x_\varnothing^i\in E_i\). Suppose that \(x_v^i\) has been
defined for a vertex \(v\in \Tau_k^i\). Applying Lemma~\ref{lem:finite-discretization} to~\(E_i\), the
point~\(x_v^i\), and the scale~\(r_k\), we obtain a finite set
\[
\{x_u^i : u\in \children(v)\}\subset E_i\cap B(x_v^i,r_k),
\]
where \(\children(v)\subset \Tau_{k+1}^i\) denotes the set of children of~\(v\), together with a
point \(z_v^i\in \R^d\) such that
\begin{equation}\label{eq:alpha-ball}
B(z_v^i,\alpha r_k)\subset \conv\bigl(\{x_u^i:u\in \children(v)\}\bigr).
\end{equation}
This completes the recursive construction.

Set \(q_k:=(\alpha-\lambda)r_k\) for \(k\ge 0\).

\medskip
\noindent\textbf{Step 1: the children's \(z\)-points convexly span a ball of radius \(q_k\).}
Fix \(i\), \(k\), and \(v\in \Tau_k^i\). For each child \(u\in \children(v)\), the recursive
construction at the vertex \(u\) gives
\[
B(z_u^i,\alpha r_{k+1})
\subset
\conv\bigl(\{x_w^i:w\in \children(u)\}\bigr),
\]
while we have
\[
\{x_w^i:w\in \children(u)\}\subset B(x_u^i,r_{k+1}),
\]
and so
\[
B(z_u^i,\alpha r_{k+1})
\subset
\conv\bigl(\{x_w^i:w\in \children(u)\}\bigr)
\subset
B(x_u^i,r_{k+1}).
\]
In particular, \(B(z_u^i,\alpha r_{k+1})\subset B(x_u^i,r_{k+1})\) implies
\[
|z_u^i-x_u^i|+\alpha r_{k+1}\le r_{k+1};
\]
hence, we have
\[
|z_u^i-x_u^i|\le (1-\alpha)r_{k+1}\le r_{k+1}.
\]
Therefore, for each \(u\in \children(v)\) we have \(x_u^i\in z_u^i+B(0,r_{k+1})\), and thus
\[
\{x_u^i:u\in \children(v)\}\subset \{z_u^i:u\in \children(v)\}+B(0,r_{k+1}).
\]
Applying Lemma~\ref{lem:support-perturbation} to~\eqref{eq:alpha-ball} with
\[
A=\{x_u^i:u\in \children(v)\},\qquad
F=\{z_u^i:u\in \children(v)\},\qquad
\varepsilon=r_{k+1}=\lambda r_k,
\]
(which is admissible since \(\lambda<\alpha\)), we obtain
\begin{equation}\label{eq:q-ball}
B(z_v^i,\alpha r_k-r_{k+1})
=
B(z_v^i,q_k)
\subset
\conv\bigl(\{z_u^i:u\in \children(v)\}\bigr).
\end{equation}

\medskip
\noindent\textbf{Step 2: radius control.}
Fix \(i\), \(k\), and \(v\in \Tau_k^i\), and let \(u\in \children(v)\). As above,
\[
B(z_u^i,\alpha r_{k+1})\subset B(x_u^i,r_{k+1}),
\]
and hence
\[
|z_u^i-x_u^i|\le (1-\alpha)r_{k+1}.
\]
Therefore, we have
\[
|z_u^i-x_v^i|
\le
|z_u^i-x_u^i|+|x_u^i-x_v^i|
\le
(1-\alpha)r_{k+1}+r_k
=
(1+\lambda-\alpha\lambda)r_k
\le
(1+\lambda)r_k;
\]
hence, we obtain
\begin{equation}\label{eq:radius-control}
\{z_u^i:u\in \children(v)\}\subset B\bigl(x_v^i,(1+\lambda)r_k\bigr).
\end{equation}

\medskip
\noindent\textbf{Step 3: the approximating sums are monotone.}
For each \(i\) and \(k\), set
\[
F_{i,k}:=\bigcup_{v\in \Tau_k^i} B(z_v^i,q_k)
\]
and define
\[
H_k:=F_{1,k}+\cdots+F_{n,k}.
\]

We claim that
\begin{equation}\label{eq:monotonicity}
H_k\subset H_{k+1}
\qquad (k\ge 0).
\end{equation}

To prove \eqref{eq:monotonicity}, we fix \(k\ge 0\) and a tuple of parent vertices
\[
(v_1,\dots,v_n)\in \Tau_k^1\times\cdots\times\Tau_k^n.
\]
For each \(i\), define the finite set
\[
A_i:=\{z_u^i:u\in \children(v_i)\}.
\]
By~\eqref{eq:q-ball},
\[
B(z_{v_i}^i,q_k)\subset \conv(A_i),
\]
and by~\eqref{eq:radius-control},
\[
A_i\subset B\bigl(x_{v_i}^i,R_k\bigr),
\qquad
R_k:=(1+\lambda)r_k.
\]
Moreover, the numerical hypothesis~\eqref{eq:parametric-bound} gives
\[
nq_{k+1}
=
n(\alpha-\lambda)r_{k+1}
=
n\lambda(\alpha-\lambda)r_k
>
\sqrt d\,(1+\lambda)r_k
=
R_k\sqrt d.
\]
Applying Lemma~\ref{lem:one-step-absorption} with \(b_i=z_{v_i}^i\), \(c_i=x_{v_i}^i\), \(q=q_k\), \(Q=q_{k+1}\), and
\(R=R_k\), we obtain
\begin{equation}\label{eq:parent-to-children}
B(z_{v_1}^1,q_k)+\cdots+B(z_{v_n}^n,q_k)
\subset
\left(\bigcup_{u_1\in \children(v_1)} B(z_{u_1}^1,q_{k+1})\right)+\cdots+
\left(\bigcup_{u_n\in \children(v_n)} B(z_{u_n}^n,q_{k+1})\right).
\end{equation}

Now, we take the union of~\eqref{eq:parent-to-children} over all parent tuples
\((v_1,\dots,v_n)\in \Tau_k^1\times\cdots\times\Tau_k^n\). Using distributivity of
Minkowski addition over unions, and the fact that
\[
\Tau_{k+1}^i=\bigsqcup_{v\in \Tau_k^i}\children(v)
\qquad (1\le i\le n),
\]
we find
\begin{align*}
H_k
&=
\bigcup_{(v_1,\dots,v_n)\in \Tau_k^1\times\cdots\times\Tau_k^n}
\Bigl(B(z_{v_1}^1,q_k)+\cdots+B(z_{v_n}^n,q_k)\Bigr)\\
&\subset
\bigcup_{(v_1,\dots,v_n)\in \Tau_k^1\times\cdots\times\Tau_k^n}
\left(
\bigcup_{u_1\in \children(v_1)} B(z_{u_1}^1,q_{k+1})
+\cdots+
\bigcup_{u_n\in \children(v_n)} B(z_{u_n}^n,q_{k+1})
\right)\\
&=
\left(\bigcup_{w_1\in \Tau_{k+1}^1} B(z_{w_1}^1,q_{k+1})\right)+\cdots+
\left(\bigcup_{w_n\in \Tau_{k+1}^n} B(z_{w_n}^n,q_{k+1})\right)\\
&=H_{k+1};
\end{align*}
thus proving~\eqref{eq:monotonicity}.

\medskip
\noindent\textbf{Step 4: passage to the limit.}
For every \(i\), \(k\), and \(v\in \Tau_k^i\), we have
\[
B(z_v^i,\alpha r_k)\subset \conv\bigl(\{x_u^i:u\in \children(v)\}\bigr),\qquad
\{x_u^i:u\in \children(v)\}\subset B(x_v^i,r_k),
\]
and hence
\[
B(z_v^i,\alpha r_k)\subset B(x_v^i,r_k).
\]
As in Step~1, the inclusion \(B(z_v^i,\alpha r_k)\subset B(x_v^i,r_k)\) implies
\[
|z_v^i-x_v^i|+\alpha r_k\le r_k,
\qquad\text{so}\qquad
|z_v^i-x_v^i|\le (1-\alpha)r_k.
\]
Thus, if \(y\in B(z_v^i,q_k)\), then by the triangle inequality
\[
|y-x_v^i|
\le
|y-z_v^i|+|z_v^i-x_v^i|
\le
q_k+(1-\alpha)r_k
=
(\alpha-\lambda)r_k+(1-\alpha)r_k
=
(1-\lambda)r_k.
\]
Since \(x_v^i\in E_i\), it follows that
\[
B(z_v^i,q_k)\subset E_i+B\bigl(0,(1-\lambda)r_k\bigr).
\]
Thus, we find
\[
F_{i,k}\subset E_i+B\bigl(0,(1-\lambda)r_k\bigr),
\]
and therefore, using \(B(0,a)+B(0,b)=B(0,a+b)\),
\[
H_k\subset
(E_1+\cdots+E_n)+B\bigl(0,n(1-\lambda)r_k\bigr).
\]
Since \(H_0\subset H_k\) for all \(k\ge 0\) by~\eqref{eq:monotonicity}, every point
\(p\in H_0\) satisfies
\[
\dist(p,\,E_1+\cdots+E_n)\le n(1-\lambda)r_k
\]
for every \(k\). Letting \(k\to\infty\) gives \(\dist(p,\,E_1+\cdots+E_n)=0\).
Because \(E_1+\cdots+E_n\) is compact (hence closed), this implies \(p\in E_1+\cdots+E_n\).
Hence, we see that
\[
H_0\subset E_1+\cdots+E_n.
\]
But we have
\[
H_0
=
B(z_\varnothing^1,q_0)+\cdots+B(z_\varnothing^n,q_0)
=
B\!\left(z_\varnothing^1+\cdots+z_\varnothing^n,\ nq_0\right),
\]
and \(q_0=(\alpha-\lambda)r_0>0\). Thus \(H_0\subset E_1+\cdots+E_n\) is a ball with positive radius, so
\(E_1+\cdots+E_n\) must have non-empty interior.
\end{proof}

\begin{remark}[The role of Shapley--Folkman]
The core of the argument is in Step~3, where the Shapley--Folkman theorem enters. The
goal is to show that refining the tree from depth~\(k\) to depth~\(k+1\) does not
shrink the sumset approximation~\(H_k\). At depth~\(k\), we maintain a ball of
radius~\(q_k\) around each~\(z\)-point, and Step~1 tells us that the children's
\(z\)-points at depth~\(k+1\) have a convex hull containing that ball. If these
finite point clouds were themselves convex, then the Minkowski sum of the convex hulls
would immediately contain the sum of the balls, and we would be done---but of course
they are not convex, as they are finite sets of points. The radius form of
Shapley--Folkman bridges this gap: the sum of the \(n\) point clouds differs from the
sum of their convex hulls by at most \(R_k\sqrt d\), where \(R_k=(1+\lambda)r_k\) is a
common radius bound for the clouds. Meanwhile the \(n\) summands collectively contribute
a total ball radius of~\(nq_{k+1}\) at the finer scale. The numerical condition on~\(n\)
is chosen precisely so that \(nq_{k+1}>R_k\sqrt d\), ensuring that the Shapley--Folkman
error is absorbed and \(H_k\subset H_{k+1}\).
\end{remark}

\begin{remark}[Understanding the constant]\label{rem:constants}
The threshold
\[
\frac{\sqrt d\,(1+\lambda)}{\lambda(\alpha-\lambda)}
\]
is exactly the condition needed in Step~3 when Lemma~\ref{lem:one-step-absorption} is applied with
\(b_i=z_{v_i}^i\), \(c_i=x_{v_i}^i\), \(q=q_k\), \(Q=q_{k+1}\), and \(R=R_k\) to the child clouds
\[
A_i:=\{z_u^i:u\in \children(v_i)\}
\]
associated to a fixed parent tuple \((v_1,\dots,v_n)\). Indeed, Step~1 gives
\[
B(z_{v_i}^i,q_k)\subset \conv(A_i),
\qquad q_k=(\alpha-\lambda)r_k,
\]
while Step~2 gives the common radius bound
\[
A_i\subset B(x_{v_i}^i,R_k),
\qquad R_k=(1+\lambda)r_k.
\]
The inflation parameter on the right-hand side of Lemma~\ref{lem:one-step-absorption} is
\[
Q=q_{k+1}=(\alpha-\lambda)r_{k+1}=\lambda(\alpha-\lambda)r_k,
\]
so the lemma's hypothesis \(R_k\sqrt d\le nQ\) becomes
\[
(1+\lambda)r_k\sqrt d\le n\lambda(\alpha-\lambda)r_k,
\]
equivalently
\[
n\ge \frac{\sqrt d\,(1+\lambda)}{\lambda(\alpha-\lambda)}.
\]
Proposition~\ref{prop:parametric-thick-sum} assumes the strict version of this inequality, which is exactly what yields \(nq_{k+1}>R_k\sqrt d\) in Step~3.
\end{remark}

\section{Optimization and proof of the main theorem}

Proposition~\ref{prop:parametric-thick-sum} includes two free parameters: \(\alpha\), which measures how
much of the thickness constant is retained at each scale, and \(\lambda\), the ratio
between successive scales in the tree. The constraint is \(0<\lambda<\alpha<c\). Since
\[
\Phi(\alpha,\lambda)=\frac{\sqrt d\,(1+\lambda)}{\lambda(\alpha-\lambda)}
\]
is strictly decreasing in~\(\alpha\), there is every incentive to take \(\alpha\) as
close to~\(c\) as possible. The main conceptual reason we do not set \(\alpha=c\)
outright is that the supremum in the definition of thickness need not be attained; in the
finite discretization step, we also need a small amount of slack to pass from a thick
compact piece to a finite point cloud. Once \(\alpha<c\) is fixed, the remaining task is
to optimize in \(\lambda\in(0,\alpha)\). Here \(\lambda\) governs both the next-scale
radius \(q_{k+1}=\lambda(\alpha-\lambda)r_k\) available for absorption and the radius
bound \(R_k=(1+\lambda)r_k\) appearing in the Shapley--Folkman error term. The
computation below identifies the optimal \(\lambda\), yielding Theorem~\ref{thm:main}.

\begin{proof}[Proof of Theorem~\ref{thm:main}]
Let
\[
\Phi(\alpha,\lambda):=\frac{\sqrt d\,(1+\lambda)}{\lambda(\alpha-\lambda)},
\qquad
0<\lambda<\alpha<c.
\]
By Proposition~\ref{prop:parametric-thick-sum}, it suffices to find \(\alpha,\lambda\) with
\(0<\lambda<\alpha<c\) and \(n>\Phi(\alpha,\lambda)\).

Fix \(\alpha\in (0,c)\). For this \(\alpha\), consider
\[
f_\alpha(\lambda):=\frac{1+\lambda}{\lambda(\alpha-\lambda)},
\qquad 0<\lambda<\alpha.
\]
A direct computation gives
\[
f_\alpha'(\lambda)=0
\quad\Longleftrightarrow\quad
\lambda^2+2\lambda-\alpha=0;
\]
the unique positive root is
\[
\lambda_\alpha=\sqrt{1+\alpha}-1.
\]
Moreover, we have
\[
\alpha-\lambda_\alpha=\lambda_\alpha(1+\lambda_\alpha)>0,
\]
so indeed \(0<\lambda_\alpha<\alpha\). Thus \(\lambda_\alpha\) is an interior critical
point of \(f_\alpha\) on \((0,\alpha)\). Since \(f_\alpha(\lambda)\to+\infty\) as
\(\lambda\downarrow 0\) and as \(\lambda\uparrow \alpha\), this critical point is the
unique global minimizer of \(f_\alpha\) on \((0,\alpha)\). At this critical point,
\[
\Phi(\alpha,\lambda_\alpha)
=
\frac{\sqrt d\,(1+\lambda_\alpha)}{\lambda_\alpha(\alpha-\lambda_\alpha)}
=
\frac{\sqrt d\,(1+\lambda_\alpha)}{\lambda_\alpha\cdot\lambda_\alpha(1+\lambda_\alpha)}
=
\frac{\sqrt d}{\lambda_\alpha^2}
=
\frac{\sqrt d}{(\sqrt{1+\alpha}-1)^2}.
\]
The function \(\alpha\mapsto \sqrt d/(\sqrt{1+\alpha}-1)^2\) is strictly decreasing on
\((0,\infty)\). Therefore, if
\[
n>\frac{\sqrt d}{(\sqrt{1+c}-1)^2},
\]
we may choose \(\alpha<c\) sufficiently close to \(c\) that
\(n>\Phi(\alpha,\lambda_\alpha)\). Proposition~\ref{prop:parametric-thick-sum} then applies, showing
that \(E_1+\cdots+E_n\) has non-empty interior.

Finally, rationalizing the denominator gives the equivalent form
\[
\frac{\sqrt d}{(\sqrt{1+c}-1)^2}
=
\frac{\sqrt d\,(\sqrt{1+c}+1)^2}{c^2},
\]
and since \(0<c\le 1\),
\[
\frac{\sqrt d\,(\sqrt{1+c}+1)^2}{c^2}
\le
\frac{\sqrt d\,(\sqrt2+1)^2}{c^2}
=
\frac{(3+2\sqrt2)\sqrt d}{c^2}
<
\frac{6\sqrt d}{c^2}. \qedhere
\]
\end{proof}

\section{Concluding remarks}

\subsection{The mechanism behind the improvement}
The improvement from Feng--Wu's cubic threshold to the present quadratic one, and the
further sharpening from a linear \(d\)-loss to a \(\sqrt{d}\)-loss within the
simultaneous-convexification strategy, can be understood as follows. In the Feng--Wu
proof, each generation of the tree construction requires a geometric step that replaces a
discrete point cloud by a ball, applied to one summand at a time. Each such replacement
incurs a multiplicative loss of order~\(c\) in the radius of the ball maintained by the
induction. Since the counting of summands required to absorb these losses ultimately
scales as~\(c^{-3}\), the cubic threshold emerges.

In our argument, the per-summand replacement step is avoided entirely:
Corollary~\ref{cor:almost-convex} simultaneously convexifies all \(n\) summands at a cost
of \(O(\sqrt d\,r_k)\), independent of~\(n\). More precisely, the relevant comparison at
generation~\(k\) is
\[
nq_{k+1}\sim n(\alpha-\lambda)\lambda\,r_k\sim nc^2r_k
\]
versus
\[
R_k\sqrt d\sim \sqrt d\,r_k,
\]
leading to the condition \(nc^2\gtrsim \sqrt d\) and hence
\(n\gtrsim \sqrt d\,c^{-2}\).

\subsection{Sharpness and dimension-dependence}
It is natural to ask whether the quadratic dependence \(n\gtrsim c^{-2}\) is optimal, and
whether the explicit factor of~\(\sqrt d\) can be improved or removed. While the present
paper does not completely settle either question, the proof does show that the
dimensional loss enters at a very specific point: the one-step absorption in Step~3
passes through Corollary~\ref{cor:almost-convex}, and hence through the radius form of the
Shapley--Folkman theorem. In this sense, the square-root dependence on~\(d\) is a
feature of the simultaneous-convexification route used here.

At the same time, as noted in Remark~\ref{rem:almost-convex-coarse}, the estimate
\(R\sqrt d\) is still a deliberately uniform bound. Moreover, the sets \(A_i\) arising
from the tree construction are highly structured, and that extra structure may permit
better estimates than the common-radius bound used here.

Accordingly, it would be interesting to know whether one can retain the quadratic
dependence on \(c^{-1}\) while sharpening the dimensional constant further, or even
removing the dimensional loss altogether by a different simultaneous-convexification
mechanism.

\subsection*{Acknowledgments}

Part of this work was conducted while I was visiting the Technological Innovation, Entrepreneurship, and Strategic Management (TIES) group at the MIT Sloan School of Management; I greatly appreciate their hospitality.

I used LLMs to assist with some of the computations and analysis in the preparation of this article, particularly GPT-5.4 Pro and Claude 4.6 Opus (both accessed via Poe with the support of Quora, where I am an advisor). I particularly appreciate helpful comments from Refine.ink. The problem, analysis, and eventual written form are my own; and of course any errors remain my responsibility.

\end{document}